\numberwithin{equation}{section}
\theoremstyle{plain}
\newtheorem{theorem}{Theorem}[section]
\newtheorem{lemma}[theorem]{Lemma}
\newtheorem*{maintheorem*}{Main Theorem}
\newtheorem*{maintheoremone*}{Theorem 1}
\newtheorem*{maintheoremtwo*}{Theorem 2}
\newtheorem*{cdensitytheorem}{Artin-Chebotarev Density Theorem}
\newtheorem*{hclassfield}{Hilbert Class Field}
\newtheorem*{greentao}{Green-Tao Theorem}
\newtheorem*{dirichlettheorem}{Dirichlet's Theorem}
\newtheorem*{lemmareworked}{Lemma \ref{lem:ifpdividesthensodoesptwo}}
\newtheorem*{artinsymbol}{Artin Symbol}
\newtheorem*{pnt}{Prime Number Theorem}
\newtheorem*{fermat}{Fermat's Two Squares Theorem}
\newcommand{\acd}{generalized PNT}
\theoremstyle{definition}
\newtheorem{examples}[theorem]{Examples}
\newcommand{\defeq}{\vcentcolon=}
\newcommand{\Q}{\mathbb{Q}}
\newcommand{\Z}{\mathbb{Z}}
\newcommand{\N}{\mathbb{N}}
\renewcommand{\P}{\mathbb{P}}
\newcommand{\eps}{\epsilon}
\newcommand{\NPsi}{\N_\Psi}
\newcommand{\PPsi}{\P_\Psi}
\newcommand{\Pram}{\P_{\text{ram}}}
\newcommand{\Phigh}{\P_{\text{high}}}
\newcommand{\Plow}{\P_{\text{low}}}
\newcommand{\Psplit}{\P_{\text{split}}}
\newcommand{\Ploverq}{\P_{L / \Q}}
\newcommand{\normkq}{\mathcal{N}_{K / \Q}}
\newcommand{\idealnorm}{\mathcal{N}}
\newcommand{\intringk}{\mathcal{O}_K}
\newcommand{\koverq}{K \big / \Q}
\newcommand{\intringl}{\mathcal{O}_L}
\newcommand{\loverq}{L \big / \Q}
\newcommand{\loverk}{L \big / K}
\newcommand{\toverk}{T \big / K}
\newcommand{\artin}[3]{\left( \dfrac{#1 / #2}{#3}\right)}
\def\moverlay{\mathpalette\mov@rlay}
\def\mov@rlay#1#2{\leavevmode\vtop{%
   \baselineskip\z@skip \lineskiplimit-\maxdimen
   \ialign{\hfil$\m@th#1##$\hfil\cr#2\crcr}}}
\newcommand{\charfusion}[3][\mathord]{
    #1{\ifx#1\mathop\vphantom{#2}\fi
        \mathpalette\mov@rlay{#2\cr#3}
      }
    \ifx#1\mathop\expandafter\displaylimits\fi}
\newcommand{\cupdot}{\charfusion[\mathbin]{\cup}{\cdot}}
\newcommand{\bigcupdot}{\charfusion[\mathop]{\bigcup}{\cdot}}
\newcommand{\gal}{\text{Gal}}
\newcommand{\galloverk}{\gal \left( \loverk \right)}
\newcommand{\gaussint}{\Z[i]}
\newcommand{\gaussfield}{\Q(i)}
\newcommand{\baseelt}{\omega}
\newcommand{\pprime}{\mathfrak{p}}
\newcommand{\bprime}{\mathfrak{B}}
\newcommand{\cclass}[1]{\langle {#1} \rangle}
\newcommand{\den}{\mathbf{d}}
\newcommand{\upden}{\overline{\mathbf{d}}}
\newcommand{\lowden}{\underline{\mathbf{d}}}
\newcommand{\denrel}{\mathbf{d_\P}}
\newcommand{\updenrel}{\overline{\mathbf{d_\P}}}
\newcommand{\lowdenrel}{\underline{\mathbf{d_\P}}}
\newcommand{\lden}{\bm{\ell d}}
\newcommand{\ldenrel}{\bm{\ell d_\P}}
\newcommand{\dden}{\bm{\delta}}
\newcommand{\ddenrel}{\bm{\delta_\P}}
\author[D. Glasscock]{Daniel Glasscock}
\address{UMass Lowell, Math Sciences Dept, 1 University Ave, Lowell MA 01854}
\email{daniel\textunderscore glasscock@uml.edu}
\title{Norm forms represent few integers but relatively many primes}
\begin{document}

\begin{abstract}
Norm forms, examples of which include
\begin{align*}x^2 + y^2, \quad x^2 + x y - 57 y^2, \quad \text{and} \quad x^3 + 2 y^3 + 4 z^3 - 6 x y z,\end{align*}
are integral forms arising from norms on number fields. We prove that the natural density of the set of integers represented by a norm form is zero, while the relative natural density of the set of prime numbers represented by a norm form exists and is positive. These results require tools from class field theory, including the Artin-Chebotarev density theorem and the Hilbert class field. We introduce these tools as we need them in the course of the main arguments. This article is expository in nature and assumes only a first course in algebraic number theory.
\end{abstract}

\maketitle

\section{Introduction}\label{sec:defs} 

Denote by $\Z$, $\N$, $\Q$, and $\P$ the sets of integers, positive integers, rational numbers, and (positive, rational) primes, respectively.

Let $K$ be an algebraic number field of degree $d \geq 2$.  Denote by $\normkq: K \to \Q$ the norm and $\intringk$ the ring of integers of $K$. Let $(\baseelt_1, \ldots, \baseelt_d)$ be an integral basis for $\koverq$, that is, a basis for $\intringk$ as a (free) $\Z$-module. The map
\begin{align*}
\Psi: \Z^d &\longrightarrow \Z \\
(x_1, \ldots, x_d) &\longmapsto \normkq \big( x_1 \baseelt_1 + \ldots + x_d \baseelt_d \big)
\end{align*} 
is the \emph{norm form arising from $K$ with the basis $(\baseelt_1, \ldots, \baseelt_d)$}. Take a minute to check that $\Psi$ is a homogeneous, degree $d$, integral-coefficient polynomial in $d$ variables. The following are some examples of norm forms.\footnote{For proof that the indicated tuples are indeed integral bases of the associated number fields, see  \cite[Chapter 2]{marcusbook}, Corollary 2 to Theorem 1, Exercises 35 and 41, and Corollary 2 to Theorem 12.}
\begin{itemize}[leftmargin=0.7cm]
\item For $a \in \Z$ square-free and $a \equiv 2, 3 \pmod 4$, the basis $(1, \sqrt{a})$ for $\Q(\sqrt{a})$ gives rise to the norm form
\[\Psi(x_1,x_2) = x_1^2 - a x_2^2.\]
\item For $a \in \Z \setminus \{1\}$ square-free and $a \equiv 1 \pmod 4$, the basis $\big(1, (1+\sqrt{a})/2 \big)$ for $\Q(\sqrt{a})$ gives rise to the norm form
\[\Psi(x_1,x_2) = x_1^2 + x_1 x_2 - \frac{a-1}{4} \ x_2^2.\]
\item For $a \in \Z \setminus \{0\}$ square-free and $a \not\equiv \pm 1 \pmod 9$, the basis $\big(1, \sqrt[3]{a}, (\sqrt[3]{a})^2 \big)$ for $\Q(\sqrt[3]{a})$ gives rise to the norm form
\[\Psi(x_1,x_2,x_3) = x_1^3 + a x_2^3 + a^2 x_3^3 - 3a x_1x_2x_3.\]
\item For $\omega = e^{2\pi i / 7} + e^{-2\pi i / 7}$, the basis $(1, \omega, \omega^2)$ for $\Q(\omega)$ gives rise to the norm form 
\[\Psi(x_1,x_2,x_3) = x_1^3-x_2^3+x_3^3 -3 x_1 (x_2^2-x_3^2) +3 x_3 (2 x_1+x_2) (x_1+x_3).\]
\item For $\omega = e^{2\pi i / 5}$, the basis $(1, \omega, \omega^2, \omega^3)$ for $\Q(\omega)$ gives rise to the norm form 
\[\Psi(x_1,x_2,x_3,x_4) = \det \left( \begin{array}{cccc}
x_1 & \phantom{x_2}-x_4 & x_4-x_3 & x_3-x_2 \\
x_2 & x_1-x_4 & \phantom{x_2}-x_3 & x_4-x_2 \\
x_3 & x_2-x_4 & x_1-x_3 & \phantom{x_2}-x_2 \\
x_4 & x_3-x_4 & x_2-x_3 & x_1-x_2 
\end{array} \right).\]
\end{itemize}

Given a norm form $\Psi$, let
\[\NPsi = \left\{ |\Psi(z)| \ \middle| \ z \in \Z^d \setminus \{0\} \right\} \quad \text{ and } \quad \PPsi = \NPsi \cap \P\]
be the set of positive integers and prime numbers represented by (the absolute value of) $\Psi$. We will prove the following theorem regarding these sets.

\begin{maintheorem*}
Let $\Psi$ be a norm form.
\begin{enumerate}[label=(\Roman*)]
\item $\Psi$ represents a zero density of the positive integers:
\[ \den(\NPsi) \defeq \lim_{N \to \infty} \frac{\big| \NPsi \cap \{1, \ldots, N \}\big|}{N} =0.\]
\item $\Psi$ represents a positive relative density of the prime numbers:
\[ \denrel(\PPsi) \defeq \lim_{N \to \infty} \frac{\big| \PPsi \cap \{1, \ldots, N \}\big|}{\big| \mathbb{P} \cap \{1, \ldots, N \}\big|} > 0.\]
\end{enumerate}
\end{maintheorem*}

Throughout this note, \emph{density} and \emph{relative density} will refer to the natural asymptotic density $\den$ and its restriction to the primes $\denrel$ as defined above. It is a non-trivial part of the conclusion of the main theorem that the limits in the expressions of the densities exist.  Part (I) of the Main Theorem is both strengthened and generalized in the work of Odoni \cite{odoni}.

By choosing different integral bases, a single number field $K$ gives rise to many different norm forms. While such forms are described by different expressions, they are equivalent up to an invertible, integral change of variables and hence will represent the same set of positive integers. Therefore, the sets $\NPsi$ and $\PPsi$ could be denoted $\N_K$ and $\P_K$ where $K$ is the number field from which $\Psi$ arises.

The main theorem does not hold for all homogeneous, integral forms. The form $x^2-y^2$, for example, represents all integers not congruent to 2 modulo 4, while $x^3+y^3+z^3$ is conjectured to represent all integers not congruent to $\pm 4$ modulo 9 \cite{connvaserstein}. It is easy to see that neither of these forms is a norm form since neither represents 0 uniquely. Norm forms are very special homogeneous, integral forms, and the proof of the main theorem relies heavily on their algebraic origins.

\section{A first example and some basic lemmas}\label{sec:simpleexample}

In this section, we will prove the main theorem for the simplest norm form
\[\Psi(x,y) = x^2 + y^2,\]
establishing in the process some lemmas and the approach used to handle more complicated ones.  We begin by showing that $\Psi$ represents a zero density of the positive integers.

\begin{lemma}\label{lem:ifpdividesthensodoesptwo}
Let $n \in \NPsi$. For all $p \in \P$ congruent to 3 modulo 4, if $p | n$, then $p^2 | n$.
\end{lemma}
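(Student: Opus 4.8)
Write $n = \Psi(x,y) = x^2 + y^2$ for some $(x,y) \in \Z^2 \setminus \{0\}$ (the absolute value in the definition of $\NPsi$ is vacuous here since $\Psi \geq 0$). The plan is to show that if a prime $p \equiv 3 \pmod 4$ divides $n$, then in fact $p \mid x$ and $p \mid y$; squaring then gives $p^2 \mid x^2 + y^2 = n$ immediately. The whole content of the lemma is therefore the first implication, and the key arithmetic input is the classical fact that $-1$ is \emph{not} a quadratic residue modulo a prime $p \equiv 3 \pmod 4$.

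To carry this out, suppose $p \mid n$ and, toward a contradiction, that $p \nmid x$. Then $x$ is invertible in $\Z / p \Z$, and reducing $x^2 + y^2 \equiv 0 \pmod p$ gives $(y x^{-1})^2 \equiv -1 \pmod p$, so $-1$ is a square in the cyclic group $(\Z / p \Z)^\times$ of order $p - 1$. By Euler's criterion (or directly: a cyclic group of even order $p-1$ contains an element of order $4$ iff $4 \mid p - 1$), this forces $p \equiv 1 \pmod 4$, contradicting $p \equiv 3 \pmod 4$. Hence $p \mid x$, and then $p \mid y^2 = n - x^2$, so $p \mid y$ as well. Consequently $p^2 \mid x^2$ and $p^2 \mid y^2$, whence $p^2 \mid n$, as claimed.

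I would likely also record the equivalent formulation in terms of the Gaussian integers $\gaussint$, since it previews the style of argument used later in the paper: a prime $p \equiv 3 \pmod 4$ is exactly one that remains prime (inert) in $\gaussint$, and $n = x^2 + y^2$ is the norm of $x + yi$. From $p \mid (x + yi)(x - yi)$ and $p$ prime in $\gaussint$ one gets $p \mid x + yi$ or $p \mid x - yi$; comparing real and imaginary parts yields $p \mid x$ and $p \mid y$ in either case, and one concludes as before. There is no real obstacle here — the lemma is elementary — and the only point at which a reader must pause is the input that such $p$ is inert in $\gaussint$, equivalently that $-1$ is a non-residue modulo $p$; everything else is bookkeeping.
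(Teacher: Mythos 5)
Your proof is correct and is essentially identical to the paper's: both reduce $x^2+y^2\equiv 0 \pmod p$ with $p\nmid x$ to the statement that $-1$ is a quadratic residue modulo $p$, contradict Euler's criterion, and conclude $p\mid x$, $p\mid y$, hence $p^2\mid n$. Your closing remark about the Gaussian-integer reformulation ($p$ inert in $\gaussint$) is exactly the second proof the paper gives later via Lemma \ref{lem:highprimes}, so it is a good instinct but not a different argument.
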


\begin{proof}
Write $n = x^2 + y^2$. Suppose $p \in \P$ is congruent to 3 modulo 4 and $p | n$.  If $p \nmid x$, there exists $y_0 \in \Z$ such that $x^2 + y^2 \equiv x^2(1+y_0^2) \pmod p$. Multiplying by the modulo $p$ inverse of $x$, we see that $-1$ is a quadratic residue modulo $p$. This is in contradiction to Euler's Criterion, which states that $-1$ is a quadratic residue modulo an odd prime $p$ if and only if $p$ is congruent to 1 modulo 4. We conclude that $p | x$, whence $p | y$, whereby $p^2 | n$.
\end{proof}

This lemma describes a constraint on the modulo $p^2$ residue class into which the set $\NPsi$ can fall. The next lemma shows that a set which satisfies many such constraints must have zero asymptotic density. Call a subset $P \subseteq \N$ \emph{divergent} if $\sum_{p \in P} p^{-1} = \infty$.

\begin{lemma}\label{lem:zerodensityfromCRT}
If $P \subseteq \P$ is divergent, then the set
\begin{align}\label{eqn:defofa}A = \big\{ n \in \N \ \big| \ \text{for all } p \in P, \text{ if } p | n, \text{ then } p^2 | n \big\}\end{align}
has zero asymptotic density: $\den(A) = 0$.
\end{lemma}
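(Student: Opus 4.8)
The plan is to bound the density of $A$ from above by choosing a large but finite subset of $P$ and using the Chinese Remainder Theorem to control the contribution of each prime.

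\medskip

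\noindent\textbf{Approach.} Fix a finite set $\{p_1, \ldots, p_k\} \subseteq P$ of distinct primes. The defining condition on $A$ in particular says that for each $i$, an element $n \in A$ satisfies: either $p_i \nmid n$, or $p_i^2 \mid n$. So if we look at $n$ modulo $p_i^2$, it avoids the $p_i - 1$ residue classes that are divisible by $p_i$ but not by $p_i^2$. Thus the ``allowed'' residues modulo $p_i^2$ number at most $p_i^2 - (p_i - 1)$, a proportion of $1 - (p_i-1)/p_i^2$ of all residues. Since the moduli $p_1^2, \ldots, p_k^2$ are pairwise coprime, the Chinese Remainder Theorem tells us that the allowed residues modulo $M \defeq p_1^2 \cdots p_k^2$ form a proportion at most $\prod_{i=1}^k \big(1 - (p_i - 1)/p_i^2\big)$ of all residues modulo $M$.

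\medskip

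\noindent\textbf{Key steps.} First I would make the elementary counting observation above precise: for any fixed modulus $M$ and any union $R$ of residue classes modulo $M$, the set of $n \in \N$ lying in one of those classes has asymptotic density exactly $|R|/M$ (this is an easy consequence of partitioning $\{1, \ldots, N\}$ into blocks of length $M$). Since $A$ is contained in the set of $n$ whose residue modulo $M$ is ``allowed'' in the sense above, we get
\[\upden(A) \leq \prod_{i=1}^k \left(1 - \frac{p_i - 1}{p_i^2}\right).\]
Next, I would take logarithms and use the bound $\log(1 - t) \leq -t$ for $t \in [0,1)$ to get
\[\log \upden(A) \leq -\sum_{i=1}^k \frac{p_i - 1}{p_i^2}.\]
Finally, since $P$ is divergent, $\sum_{p \in P} (p-1)/p^2$ diverges as well (for $p \geq 2$ we have $(p-1)/p^2 \geq 1/(2p)$, so this sum is at least $\tfrac12 \sum_{p \in P} p^{-1} = \infty$). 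Hence by choosing $k$ large the right-hand side can be made as negative as we like, forcing $\upden(A) = 0$, and since $\upden(A) \geq \lowden(A) \geq 0$ we conclude $\den(A) = 0$.

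\medskip

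\noindent\textbf{Main obstacle.} There is no deep obstacle here; the argument is a routine sieve-type estimate. The one point that needs a little care is the logical structure: $A$ is defined by infinitely many constraints, but density is most easily computed from finitely many, so one must be careful to only claim $A$ is \emph{contained in} the set cut out by $p_1, \ldots, p_k$ (an upper bound on density), rather than equal to it. The other mild technical point is that a priori we only know $\upden(A) = 0$ from this argument; upgrading to $\den(A) = 0$ is immediate since density zero for the upper density forces the limit to exist and equal $0$.
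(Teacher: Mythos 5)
Your argument is correct and is essentially the paper's own proof: the same count of $p^2-(p-1)$ allowed residues modulo $p^2$, the same Chinese Remainder Theorem product bound on $\upden(A)$, the same logarithm estimate $\log(1-t)\leq -t$, and the same comparison $(p-1)/p^2 \geq 1/(2p)$ to invoke divergence of $P$. Nothing further is needed.
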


\begin{proof}
For $p \in P$, elements of the set $A$ occupy $p^2 - (p-1)$ modulo $p^2$ residue classes: $A$ avoids the classes $p, 2p, \ldots, (p-2)p,$ and $(p-1)p$ modulo $p^2$ since numbers in these classes are divisible by $p$ but not by $p^2$.

Given $p_1, \ldots, p_N$ distinct primes in $P$, the Chinese Remainder Theorem tells us that elements of the set $A$ occupy $\prod_{i=1}^N \big(p_i^2 - (p_i-1) \big)$ modulo $\prod_{i=1}^N p_i^2$ residue classes. Therefore,
\[\upden(A) \defeq \limsup_{N \to \infty} \frac{\big| A \cap \{1, \ldots, N \}\big|}{N} \leq \prod_{i=1}^N \frac{p_i^2 - (p_i-1)}{p_i^2} \leq \prod_{i=1}^N \left(1 - \frac 1 {2p_i} \right).\]

Now we will use the fact that $P$ is divergent to show that the right hand side of the previous expression can be made arbitrarily small. Let $\eps > 0$ and choose $p_1, \ldots, p_N$ distinct primes in $P$ so that $\sum_{i=1}^N p_i^{-1} > 2 \log (\eps^{-1})$. Using that $\log(1-x) < -x$ for $0 < x < 1$,
\[\log \prod_{i=1}^N \left(1 - \frac 1 {2p_i} \right) = \sum_{i=1}^N \log \left(1 - \frac 1 {2p_i} \right) \leq  - \frac 12 \sum_{i=1}^N p_i^{-1} < \log \eps.\]
This shows that $\upden(A) = 0$, whereby $\den(A) = 0$.
\end{proof}

To complete the proof that $\den(\NPsi) = 0$, we have only to show that the set of primes which are congruent to 3 modulo 4 is divergent. This was shown first by Dirichlet in 1837: primes equidistribute (with respect to the \emph{Dirichlet density}) amongst residue classes coprime to the modulus. Strengthening and generalizing this result, the Prime Number Theorem (PNT) of Hadamard and de la Vall\'ee-Poussin from 1896 gives asymptotics for the number of primes in arithmetic progressions.\footnote{de la Vall\'ee Poussin went on to give asymptotics for the set of primes represented by a given binary quadratic form; see \cite[Chapter 5]{narkiewiczbook}.} We use the notation $f(N) \sim g(N)$, $N \to \infty$, to mean that $f(N)\big/ g(N) \to \infty$ as $N \to \infty$, and we denote by $\varphi$ Euler's totient function.

\begin{pnt}[{\cite[Theorem 5.14]{narkiewiczbook}}]
If $a, b \in \N$ are coprime, then
\[\big| \P \cap (a \N + b) \cap \{1, \ldots, N\} \big| \sim \frac 1{\varphi(a)} \frac{N}{\log N}, \qquad N \to \infty;\]
in particular, $\denrel \big( \P \cap (a \N + b) \big) = 1 /\varphi(a)$.
\end{pnt}

Now we turn to showing that $\Psi$ represents a positive relative density of the primes numbers. The following classic fact was proven first by Euler around 1750 but was known already to Fermat a hundred years earlier.

\begin{fermat}[{\cite[Theorem 1.2]{coxbook}}]\label{thm:psumoftwosquares}
An odd prime $p \in \P$ is representable as a sum of two squares if and only if it is congruent to 1 modulo 4.
\end{fermat}

\noindent It follows from Fermat's theorem that $\PPsi = \{2\} \cup \big(\P \cap (4 \N + 1) \big)$. By the PNT,
\[\denrel \big( \PPsi \big) = \denrel \big( \{2\} \cup \big(\P \cap (4 \N + 1) \big) \big) = \frac 12,\]
completing the proof of the main theorem for the norm form $x^2 + y^2$.

We will follow this outline to prove the main theorem in its full generality. Thus, to show that $\den(\NPsi) = 0$, we will prove an analogue of Lemma \ref{lem:ifpdividesthensodoesptwo} and then apply Lemma \ref{lem:zerodensityfromCRT}. To show that $\denrel(\PPsi) > 0$, we will give a description of $\PPsi$ and apply a generalization of the PNT. In both parts, we require an understanding of how primes behave in the number field from which $\Psi$ arises, so we proceed by reviewing the necessary notation and terminology. This material is standard and can be found, for example, in \cite[Section 5]{coxbook} or \cite[Chapter 3]{marcusbook}.

\section{Prime ideals and example revisited}\label{sec:primeideals} 

Let $K$ be a number field. Every non-zero, proper ideal of the ring of integers $\intringk$ of $K$ factors uniquely into a product of prime ideals (that is, $\intringk$ is a \emph{Dedekind domain}). The \emph{(absolute) norm} of an ideal $I \subseteq \intringk$ is $\idealnorm(I) = |\intringk \big/ I|$. The norm of the product of ideals is the product of their norms, and the norm of a principal ideal is $\idealnorm(\omega \intringk) = |\normkq(\omega)|$. Every non-zero prime ideal $\pprime$ of $\intringk$ is maximal, hence the \emph{residue field} $\intringk / \pprime$ is a finite field and the norm of $\pprime$ is a prime power.  We say that a prime ideal $\pprime$ is \emph{degree 1} if its norm $\idealnorm(\pprime)$ is prime.

Let $L$ be a finite extension of $K$, and denote its ring of integers by $\intringl$. Since we will frequently need to refer to prime ideals of $\intringk$ and $\intringl$, it will be convenient to call such ideals simply \emph{primes of $K$ and $L$}.

Let $\bprime$ be a prime of $L$. The ideal $\pprime = \bprime \cap \intringk$ is the prime of $K$ that \emph{lies below} $\bprime$ (we also say $\bprime$ \emph{lies above} $\pprime$). This leads to the \emph{residue field extension} $\intringl \big/ \bprime \supseteq \intringk \big/ \pprime \supseteq \Z \big/ p\Z$, where $p\Z$ is the prime of $\Q$ that lies below $\bprime$ (and below $\pprime$). Note that if $\bprime$ is degree 1, then $\intringl \big/ \bprime = \intringk \big/ \pprime = \Z \big/ p\Z$.

Going the other direction, let $\pprime$ be a prime of $K$. The ideal $\pprime \intringl$ factors (\emph{splits}) uniquely into a product of those primes $\bprime_i$ of $L$ which lie above $\pprime$:
\begin{align}\label{eqn:primesplit}\pprime \intringl = \bprime_1^{e_1} \cdots \bprime_n^{e_n}.\end{align}
We call $e_i$ and $f_i \defeq \big[ \intringl \big/ \bprime_i : \intringk \big/ \pprime \big]$ the \emph{ramification index} and \emph{inertia degree of $\bprime_i$ over $\pprime$}, respectively. If all $e_i = 1$, then $\pprime$ is \emph{unramified in $L$}, and otherwise it \emph{ramifies in $L$}. If all $e_i = 1$ and all $f_i = 1$, then $p$ is said to \emph{split completely in $L$}. Taking the norm of both sides of (\ref{eqn:primesplit}), we see that $[L:K] = \sum_{i=1}^n e_if_i$.

When $\loverk$ is Galois, the Galois group $G = \galloverk$ acts transitively on the set of primes $\{\bprime_i\}$ of $L$ above a given prime $\pprime$ of $K$. It follows in this case that all ramification indices $e_i$ are equal and all inertia degrees $f_i$ are equal, meaning $[L:K] = n e f$.\\

The main theorem translates nicely into a result about the set of norms of the principal ideals of $\intringk$. Indeed, if the norm form $\Psi$ arises from $K$, then
\begin{align*}\label{eqn:newcharacterizationofnpsi}\NPsi &= \big\{ |\normkq(\omega)| \ \big| \ \omega \in \intringk \setminus \{0\} \big\} \\
\notag &= \big\{ \idealnorm(I) \ \big| \ \text{$I$ a  principal ideal of $\intringk$} \big\}, \\
\PPsi &= \big\{ \idealnorm(\pprime) \ \big| \ \text{$\pprime$ a  principal, degree 1 prime of $K$} \big\}.
\end{align*}
This description of $\NPsi$ and $\PPsi$ shows explicitly that these sets depend not on the specific form of $\Psi$ but rather only on the number field $K$ itself.

Reformulating the results for $x^2 + y^2$ from the previous section in terms of ideals will indicate how to proceed with the general case. The norm form $\Psi(x,y)=x^2+y^2$ arises from $\gaussfield$ with the basis $(1,i)$. The ring of integers of $\gaussfield$, the Gaussian integers $\gaussint$, form a Euclidean domain, so all ideals are principal and there is unique factorization of elements into primes.

The prime factorization of $2\in \P$ in $\gaussint$ is $2 = (1+i)(1-i)$. Since $(1+i) \gaussint = (1-i) \gaussint$, we see $2\gaussint = \big((1+i) \gaussint \big)^2$, meaning $2\Z$ ramifies in $\gaussfield$. In general, only finitely many primes ramify in any given extension of number fields; see \cite[Theorems 24 and 34]{marcusbook}. Since the density of a set is unchanged with the inclusion or exclusion of finitely many elements, we will safely be able to restrict our attention to unramified primes in the next sections.

If $p \in \P$ is congruent to 3 modulo 4, then $p$ remains prime in $\gaussint$. Thus, $p\gaussint$ is a prime of $\gaussfield$ with norm $\idealnorm(p \gaussint) = p^2$. Of particular importance is that there is no degree 1 prime of $\gaussfield$ above $p\Z$; this fact, combined with the following lemma, yields another proof of Lemma \ref{lem:ifpdividesthensodoesptwo}.

\begin{lemma}\label{lem:highprimes}
Let $\Psi$ be a norm form arising from the number field $K$, and let $n \in \NPsi$. Suppose $p \in \P$ is such that there is no degree 1 prime of $K$ above $p\Z$. If $p|n$, then $p^2 | n$.
\end{lemma}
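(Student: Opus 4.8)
The plan is to recast the hypothesis $n \in \NPsi$ in the ideal-theoretic language just introduced and reduce to a statement about norms of prime ideals. Since $\NPsi = \big\{ \idealnorm(I) \ \big| \ I \text{ a principal ideal of } \intringk \big\}$, I would start from an algebraic integer $\alpha \in \intringk \setminus \{0\}$ with $|\normkq(\alpha)| = n$, set $I = \alpha \intringk$ so that $\idealnorm(I) = |\normkq(\alpha)| = n$, and factor $I$ into primes of $K$ as $I = \pprime_1^{a_1} \cdots \pprime_k^{a_k}$. Because the ideal norm is multiplicative, $n = \prod_{j=1}^k \idealnorm(\pprime_j)^{a_j}$.

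Next I would invoke the fact that the norm of a prime of $K$ is a prime power: for each $j$ there is a rational prime $q_j$ with $q_j \Z = \pprime_j \cap \Z$ and $\idealnorm(\pprime_j) = q_j^{f_j}$, where $f_j \geq 1$ is the inertia degree of $\pprime_j$ over $q_j \Z$. Comparing prime factorizations in $\Z$, the hypothesis $p \mid n$ forces $p = q_j$ for at least one index $j$; that is, some prime $\pprime_j$ occurring in the factorization of $I$ lies above $p\Z$.

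Now the assumption on $p$ enters: since there is no degree $1$ prime of $K$ above $p\Z$, the prime $\pprime_j$ from the previous step satisfies $\idealnorm(\pprime_j) = p^{f_j}$ with $f_j \geq 2$. Consequently $p^2 \mid p^{f_j} = \idealnorm(\pprime_j)$, and since $\idealnorm(\pprime_j) \mid n$, we conclude $p^2 \mid n$, as desired.

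I do not expect a genuine obstacle here — the argument is a short translation into ideals followed by multiplicativity of the norm together with the fact that prime-ideal norms are prime powers. The only subtlety worth flagging is the ramified case: a prime of $K$ above $p\Z$ could a priori have norm exactly $p$ (ramification index at least $2$ but inertia degree $1$), which would spoil the divisibility count; but such a prime is by definition degree $1$, so it is ruled out precisely by the hypothesis, and every prime above $p\Z$ therefore contributes a factor of at least $p^2$ to $n$.
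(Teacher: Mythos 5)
Your argument is correct and is essentially the paper's own proof: pass to the principal ideal $I$ with $\idealnorm(I)=n$, factor it into primes of $K$, use multiplicativity and the fact that prime-ideal norms are prime powers to locate a prime $\pprime$ above $p\Z$ dividing $n$, and invoke the hypothesis to get $\idealnorm(\pprime)=p^{f}$ with $f\geq 2$. Your closing remark about ramification is also right, since ``degree 1'' is defined by the norm (equivalently the inertia degree) and so already excludes ramified primes with residue degree 1.
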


\begin{proof}
Since $n \in \NPsi$, there exists a (principal) ideal $I \subseteq \intringk$ with $\idealnorm(I) = n$. Suppose $p | n$. Factoring $I$ into a product of primes of $K$, we see that there exists a prime $\pprime$ of $K$ with $p \big| \idealnorm(\pprime)$. This means $\pprime$ lies above $p\Z$. By assumption, $\pprime$ is not degree 1, whereby $p^2 | \idealnorm(\pprime)$. It follows that $p^2 \big | \idealnorm(I)$, meaning $p^2 | n$.
\end{proof}

If $p \in \P$ is congruent to 1 modulo 4, then $p = x^2 + y^2 = (x+iy)(x-iy)$ is the prime factorization of $p$ in $\gaussint$. It follows that $p\gaussint = (x+iy) \gaussint (x-iy) \gaussint$, meaning $p\Z$ splits completely in $\gaussfield$. Of particular importance to us is that there exists a principal, degree 1 prime of $\gaussfield$ above $p\Z$.

\begin{lemma}\label{lem:splitprimes}
Let $\Psi$ be a norm form arising from the number field $K$. The prime $p\in \P$ is represented by $\Psi$ ($p \in \PPsi$) if and only if at least one the primes of $K$ above $p\Z$ is principal and of degree 1. In other words,
\[\PPsi = \big\{ p \in \P \ \big| \ \text{there is a principal, degree 1 prime of $K$ above $p\Z$} \big\}.\]
\end{lemma}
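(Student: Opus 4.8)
The plan is to prove both inclusions directly from the ideal-theoretic description of $\PPsi$ established just before the statement, namely $\PPsi = \big\{ \idealnorm(\pprime) \ \big| \ \text{$\pprime$ a principal, degree 1 prime of $K$} \big\}$. For the forward inclusion, suppose $p \in \PPsi$. Then there is a principal, degree 1 prime $\pprime$ of $K$ with $\idealnorm(\pprime) = p$. Since $\idealnorm(\pprime) = p$ is the cardinality of the residue field $\intringk / \pprime$, and the residue field always has characteristic equal to the rational prime lying below $\pprime$, the prime of $\Q$ below $\pprime$ must be $p\Z$. Hence $\pprime$ is a principal, degree 1 prime of $K$ above $p\Z$, giving the right-hand set.

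For the reverse inclusion, suppose $\pprime$ is a principal, degree 1 prime of $K$ above $p\Z$. Being degree 1 means $\idealnorm(\pprime)$ is prime; being above $p\Z$ forces the residue characteristic to be $p$, so $\idealnorm(\pprime) = p^{f}$ for some $f \geq 1$, and combined with degree 1 this gives $\idealnorm(\pprime) = p$. Since $\pprime$ is principal, $\pprime = \omega \intringk$ for some $\omega \in \intringk \setminus \{0\}$, and then $p = \idealnorm(\pprime) = |\normkq(\omega)|$. Writing $\omega$ in the integral basis $(\baseelt_1, \ldots, \baseelt_d)$ from which $\Psi$ arises expresses $p$ as $|\Psi(z)|$ for the corresponding $z \in \Z^d \setminus \{0\}$, so $p \in \PPsi$.

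I do not expect a serious obstacle here: the statement is essentially a repackaging of the already-displayed characterization of $\PPsi$, and the only point requiring a moment's care is the bookkeeping that "degree 1 prime above $p\Z$" is exactly equivalent to "prime of norm $p$" — one must use that the residue field of a prime above $p\Z$ has characteristic $p$, so its order is a power of $p$, and degree 1 pins that power down to $p^1$. One could alternatively phrase the whole argument through the factorization $p\intringk = \pprime_1^{e_1} \cdots \pprime_n^{e_n}$ with $\sum e_i f_i = d$, observing that a degree 1 prime in this list is precisely one with $f_i = 1$; either phrasing is routine. Since the excerpt notes that densities are insensitive to finitely many elements, there is no need to separately track the finitely many ramified primes in the statement itself.
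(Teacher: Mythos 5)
Your proposal is correct and follows essentially the same route as the paper: both directions are read off from the displayed ideal-theoretic characterization of $\PPsi$, with the only work being the observation that ``principal prime of norm $p$'' and ``principal, degree 1 prime above $p\Z$'' are the same thing. Your version just spells out the residue-field bookkeeping ($\idealnorm(\pprime) = p^f$ with $f=1$) a little more explicitly than the paper does.
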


\begin{proof}
If $p \in \PPsi$, then there is a principal ideal $\pprime$ of $\intringk$ with $\idealnorm(\pprime) = p$. Since $\idealnorm(\pprime) = p$, the ideal $\pprime$ is a degree 1 prime above $p\Z$. If, conversely, $\pprime$ is a principal, degree 1 prime above $p\Z$, then $\idealnorm(\pprime) = p$, whereby $p \in\PPsi$.
\end{proof}

Lemmas \ref{lem:highprimes} and \ref{lem:splitprimes} tell us how to proceed with proving the main theorem for the general norm form $\Psi$. To show that $\den(\NPsi) = 0$, we must find a divergent set of rational primes with the property that there are no degree 1 primes of $K$ above them. To show that $\denrel(\PPsi) > 0$, we must show that the set of rational primes above which lie a principal, degree 1 prime of $K$ has positive density. Assuming we have a generalized PNT, there are still two main difficulties to overcome: handling the splitting of primes in non-Galois extensions and understanding when prime ideals are principal. Tools from class field theory help us overcome both of these difficulties.

\section{Norm forms represent few integers}\label{sec:fewintegers} 

Let $\Psi$ be a norm form arising from the number field $K$. In this section we will show that $\den(\NPsi) = 0$. We have reduced this by Lemmas \ref{lem:zerodensityfromCRT} and \ref{lem:highprimes} to showing that
\[\Phigh(K) \defeq \big\{ p \in \P \ \big| \ \text{there is no degree 1 prime of $K$ above $p\Z$} \big\}\]
is a divergent set of primes. We will prove this by describing the set $\Phigh(K)$ with the help of the Artin symbol and then using a generalization of the PNT.

\begin{artinsymbol}[{\cite[Lemma 5.19 \& Corollary 5.21]{coxbook}}]\label{lem:artinprops}
Suppose $\loverk$ is Galois. Let $\bprime$ and $\pprime$ be primes of $L$ and $K$, respectively, with $\pprime$ unramified in $L$ and $\bprime$ lying above $\pprime$. There is a unique element of $\gal(\loverk)$, denoted by the \emph{Artin symbol} $\artin{L}{K}{\bprime}$, with the property that for all $x \in \intringl$,
\[\artin{L}{K}{\bprime}(x) \equiv x^{\idealnorm(\pprime)} \pmod \bprime.\]
Moreover, for all $\sigma \in \gal(\loverk)$, $\artin{L}{K}{\sigma(\bprime)} = \sigma \artin{L}{K}{\bprime} \sigma^{-1}$.
\end{artinsymbol}

Suppose that $\loverq$ is Galois. Since $G = \gal(\loverq)$ acts transitively on the set of primes of $L$ over a given prime of $\Q$, the Artin symbol gives a natural way to associate a conjugacy class of $G$ to a rational prime that is unramified in $L$. More precisely, given a conjugacy class $\cclass \sigma \defeq \{ g \sigma g^{-1} \ | \ g \in G \}$ of the Galois group $G$, the set of rational primes corresponding to it is
\[\Ploverq \big(\cclass \sigma \big) \defeq \left\{ p \in \P \ \middle| \ \begin{aligned} & \text{$p\Z$ is unramified in $L$, and} \\ & \text{there is a prime $\bprime$ of $L$ with $\artin{L}{\Q}{\bprime} = \sigma$ above $p\Z$} \end{aligned} \right\}.\]
The Artin-Chebotarev density theorem is a generalized prime number theorem that gives asymptotics for these sets of primes.
\begin{cdensitytheorem}[{\cite[Theorem 3.4]{serrebooktwo}}\footnote{The Chebotarev density theorem as originally formulated by Chebotarev \cite[Hauptsatz]{chebotarevpaper} generalizes Dirichlet's theorem on arithmetic progressions. The formulation here is due to Artin \cite[Satz 4]{artinpaper} and generalizes the PNT. See \cite[Chapter 3]{serrebooktwo} for more discussion.}]
Let $\loverq$ be Galois. For all conjugacy classes $\cclass \sigma \subseteq \gal(\loverq)$,
\[\big| \Ploverq (\cclass \sigma) \cap \{1, \ldots, N\} \big| \sim \frac {\big|\cclass \sigma \big|}{\big|\gal(\loverq)\big|} \frac{N}{\log N}, \qquad N \to \infty;\]
in particular, $\denrel \big( \Ploverq (\cclass \sigma) \big) = \big|\cclass \sigma \big| \big / \big|\gal(\loverq)\big|$.
\end{cdensitytheorem}

Since the extension $\koverq$ we care about may not be Galois, our first step going forward is to let $L$ be the \emph{Galois closure} of $\koverq$ in $\mathbb{C}$: it is the minimal Galois extension of $\Q$ in $\mathbb{C}$ containing $K$. Let $G = \gal(\loverq)$ and $H = \gal(\loverk)$.

The next step is to describe $\Phigh(K)$ in terms of the sets $\Ploverq \big(\cclass \sigma\big)$. To do that, we need a connection between degree 1 primes of $K$ and the Artin symbol. The following lemma gives such a connection and is key to both of the conclusions in the main theorem.

\begin{lemma}[{\cite[Page 133]{neukirchbook}}]\label{lem:descriptionofhighprimes}
The set $\Phigh(K)$ consists of (up to finitely many exceptions) those rational primes corresponding to conjugacy classes of $G$ that do not intersect $H$:
\begin{align}\label{eqn:descriptofphigh}\Phigh(K) \ \dot= \ \bigcupdot_{\substack{\cclass \sigma \subseteq G \\ \cclass \sigma \cap H = \emptyset}} \Ploverq (\cclass \sigma),\end{align}
where $\dot=$ indicates that the set equality is up to finitely many elements and $\cupdot$ denotes a disjoint union.
\end{lemma}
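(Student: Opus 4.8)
The plan is to relate degree-1 primes of $K$ above a rational prime $p$ to the decomposition of $p$ in the Galois closure $L$, and then to translate that decomposition into a statement about Artin symbols. Fix a rational prime $p$ unramified in $L$ (which excludes only finitely many $p$, safely ignorable for density purposes), pick a prime $\bprime$ of $L$ above $p\Z$, and let $\sigma = \artin{L}{\Q}{\bprime}$. The key classical fact is that the primes of $K$ above $p\Z$ are in bijection with the orbits of the cyclic group $\langle \sigma \rangle$ acting on the coset space $H \backslash G$ (equivalently, with the double cosets $H \langle \sigma \rangle$ in $G$), and the inertia degree of the prime of $K$ corresponding to an orbit is the size of that orbit. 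A degree-1 prime of $K$ above $p\Z$ is exactly a prime with inertia degree $1$, i.e. a fixed point of $\langle \sigma \rangle$ on $H \backslash G$. So the content to extract is: there exists a degree-1 prime of $K$ above $p\Z$ if and only if $\langle \sigma \rangle$ has a fixed point on $H \backslash G$.

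Next I would unwind that fixed-point condition. A coset $Hg$ is fixed by $\sigma$ precisely when $Hg\sigma = Hg$, i.e. $g \sigma g^{-1} \in H$. Hence $\langle \sigma \rangle$ fixes some coset if and only if some conjugate of $\sigma$ lies in $H$, i.e. $\cclass{\sigma} \cap H \neq \emptyset$. (One should note $\sigma$ has the same order as any power needed here, but since $H$ is a subgroup, $g\sigma g^{-1} \in H$ already forces all of $\langle g\sigma g^{-1}\rangle \subseteq H$, so the "fixed by all of $\langle \sigma\rangle$" and "fixed by $\sigma$" conditions coincide — worth a line.) Therefore $p \in \Phigh(K)$ (no degree-1 prime of $K$ above $p\Z$) if and only if $\cclass{\sigma} \cap H = \emptyset$. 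Since $\sigma$ is well-defined up to conjugacy by the transitivity clause of the Artin Symbol (the "moreover" part), this is a property of the conjugacy class $\cclass{\sigma}$ attached to $p$, and every unramified $p$ lies in exactly one set $\Ploverq(\cclass{\sigma})$; this gives the disjoint union on the right-hand side of \eqref{eqn:descriptofphigh}, with equality up to the finitely many ramified primes.

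I would organize the write-up as: (1) reduce to unramified $p$; (2) state and use the orbit/inertia-degree dictionary relating primes of $K$ above $p\Z$ to $\langle \sigma\rangle$-orbits on $H\backslash G$ with inertia degrees equal to orbit sizes — citing the standard reference, since this is the one genuinely non-elementary input; (3) observe degree-1 $\Leftrightarrow$ inertia degree $1$ $\Leftrightarrow$ fixed point; (4) do the coset computation $Hg$ fixed $\iff g\sigma g^{-1}\in H$; (5) conclude. The main obstacle is really just step (2): pinning down the precise correspondence between the factorization of $p\Z$ in $\intringk$ (for $K$ possibly \emph{not} Galois over $\Q$) and the action of the decomposition group $\langle\sigma\rangle$ on cosets, including that the inertia degree of each prime of $K$ is the corresponding orbit length. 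Everything after that is elementary group theory. A minor subtlety to flag is that a single prime of $K$ may have several primes of $L$ above it with differing (conjugate) Artin symbols in $G$, but what matters is only whether the \emph{decomposition group} of $\bprime$ in $G$, namely $\langle\sigma\rangle$, meets a conjugate of $H$ inside a single $G$-orbit, so choosing one $\bprime$ above $p\Z$ suffices.
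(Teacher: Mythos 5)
Your argument is correct, but it takes a genuinely different route from the paper's. You invoke the full decomposition dictionary for a non-normal intermediate field: primes of $K$ above $p\Z$ correspond to orbits of $\langle\sigma\rangle$ acting on $H\backslash G$ (equivalently to double cosets $H g \langle\sigma\rangle$), with inertia degrees equal to orbit lengths, and then reduce ``degree $1$'' to the coset computation $Hg\sigma = Hg \iff g\sigma g^{-1}\in H$. That computation, the well-definedness up to conjugacy, and the disjointness of the union are all handled correctly, and the orbit--inertia correspondence is indeed the standard proof (it is essentially what Neukirch does on the cited page). The paper instead proves the complementary statement about $\Plow(K)$ directly from the defining congruence of the Artin symbol: if $\pprime = \bprime\cap\intringk$ has degree $1$, then $\idealnorm(\pprime)=p$ forces $\artin{L}{\Q}{\bprime} = \artin{L}{K}{\bprime} \in H$ by uniqueness of the Artin symbol; conversely, if $\artin{L}{\Q}{\bprime} = \sigma \in H$, then $\sigma$ fixes $\intringk$, so every $x \in \intringk/\pprime$ satisfies $x^p = x$ in $\intringl/\bprime$, and counting the $p$ roots of that polynomial gives $\idealnorm(\pprime) = p$. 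The trade-off: your approach buys the complete splitting type of $p$ in $K$ (all inertia degrees, not just the existence of a degree-$1$ factor), but at the cost of importing the decomposition-group/double-coset machinery, which you correctly flag as the one non-elementary input and which, in this expository setting, would itself require proof beyond the two properties of the Artin symbol the paper actually states. The paper's argument extracts exactly the needed equivalence using nothing but the defining congruence and elementary finite-field counting. Both establish the result; if you write yours up, you must either prove the orbit--inertia dictionary or cite it precisely, since it is doing all the work.
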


\begin{proof}
We will show the complementary statement, namely that
\begin{align}\label{eqn:defofplow}\Plow(K) & \defeq \big\{ p \in \P \ \big| \ \text{there exists a degree 1 prime of $K$ above $p\Z$} \big\}\\
\notag & \; \; \dot= \ \bigcupdot_{\substack{\cclass \sigma \subseteq G \\ \cclass \sigma \cap H \neq \emptyset}} \Ploverq (\cclass \sigma).
\end{align}
We will only consider rational primes that are unramified in $L$, so the descriptions of $\Plow(K)$ and $\Phigh(K)$ will be up to a finite set of exceptions. The fact that the union above is disjoint follows immediately from the definition of $\Ploverq (\cclass \sigma)$.

To show that $\Plow(K) \ \dot\subseteq \ \bigcupdot \Ploverq (\cclass \sigma)$, let $p \in \Plow(K)$, and suppose $\bprime$ is a prime of $L$ so that $\pprime = \bprime \cap \intringk$ is a degree 1 prime of $K$.  Let $\sigma = \artin{L}{\Q}{\bprime}$. Since $p \in \Ploverq (\cclass \sigma)$, we only need to show that $\cclass \sigma \cap H \neq \emptyset$.

Since $\bprime$ lies above $p\Z$, by the definition of the Artin symbol, for all $x \in \intringl$,
\[\sigma(x) \equiv x^p \pmod \bprime.\]
Since $\pprime$ is degree 1, $\idealnorm(\pprime) = p$, whence for all $x \in \intringl$,
\[\sigma(x) \equiv x^p = x^{\idealnorm(\pprime)} \equiv \artin LK{\bprime}(x) \pmod \bprime,\]
again by the definition of the Artin symbol.
It follows now by the uniqueness of the Artin symbol that $\sigma = \artin LK{\bprime}$. Since $\artin LK{\bprime} \in H$, $\cclass \sigma \cap H \neq \emptyset$.

To show that $\Plow(K) \ \dot \supseteq \ \bigcupdot \Ploverq (\cclass \sigma)$, let $p \in \Ploverq (\cclass \sigma)$ for some $\cclass \sigma \cap H \neq \emptyset$; without loss of generality, we may assume $\sigma \in H$. Let $\bprime$ be a prime of $L$ above $p\Z$ with $\artin L{\Q}{\bprime} = \sigma$, and let $\pprime = \bprime \cap \intringk$. For all $x \in \intringl$,
\[\sigma(x) \equiv x^p \pmod \bprime.\]
Since $H = \galloverk$, $\sigma$ fixes $\intringk$, thus for all $x \in \intringk$,
\begin{align*}\label{eqn:xtothepisx}\sigma(x) = x \equiv x^p \pmod \bprime.\end{align*}
It follows that for all $x \in \intringk \big / \pprime \subseteq \intringl \big / \bprime$, $x = x^p$. There are exactly $p$ solutions to the equation $x = x^p$ in the finite field $\intringl \big / \bprime$ (satisfied by elements of the base field), hence $\idealnorm(\pprime) = \big| \intringk \big / \pprime \big| = p$. This means that $\pprime$ is a degree 1 prime of $K$ over $p\Z$.
\end{proof}

To show that $\Phigh(K)$ is a divergent set of primes, we must show that the union in (\ref{eqn:descriptofphigh}) is non-empty.  The fact that there are conjugacy classes of the Galois group that avoid $H$ follows from the following general fact about finite groups.

\begin{lemma}\label{lem:finitegroupfact}
Let $G$ be a finite group and $H$ be a proper subgroup. There exists $g \in G$ for which $\cclass g \cap H = \emptyset$.
\end{lemma}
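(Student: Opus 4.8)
The plan is to exploit the classical fact that a finite group is never the union of the conjugates of a proper subgroup. First I would recall the counting argument: the conjugates $gHg^{-1}$ of $H$ are indexed by the cosets in $G/H$ via the normalizer $N_G(H)$, so there are exactly $[G : N_G(H)]$ of them, and this number is at most $[G:H]$. Each conjugate has size $|H|$, and they all share the identity element. Hence
\[\left| \bigcup_{g \in G} g H g^{-1} \right| \leq [G:H]\,\big(|H| - 1\big) + 1 = |G| - [G:H] + 1 \leq |G| - 1 < |G|,\]
using $[G:H] \geq 2$ since $H$ is proper. Therefore some element $g \in G$ lies in no conjugate of $H$, which is exactly the statement that its conjugacy class $\cclass g$ is disjoint from $H$: indeed $x \in \cclass g \cap H$ would force $g = y^{-1} x y \in y^{-1} H y$ for some $y$, contradicting that $g$ avoids all conjugates.

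I would organize the write-up in two short steps: (1) the element count above, with the one-line justification that conjugates of $H$ correspond to cosets of $N_G(H)$ and so number at most $[G:H]$; (2) the observation that "$g$ is in no conjugate of $H$" translates to "$\cclass g \cap H = \emptyset$." Step (2) is immediate from the definition of conjugacy class, so the only content is Step (1).

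The main (mild) obstacle is making sure the inequality is strict, i.e. not accidentally concluding only $\leq |G|$. This is handled by the two shared identity elements across distinct conjugates: if $H$ has at least two conjugates we gain a strict saving, and if $H$ has exactly one conjugate then $H \trianglelefteq G$ and $\bigcup g H g^{-1} = H \subsetneq G$ directly. Either way the union is proper. No deep input is needed — this is purely elementary group theory — and it slots in cleanly before applying Lemma~\ref{lem:descriptionofhighprimes} and the Artin--Chebotarev density theorem to conclude that $\Phigh(K)$ is divergent.
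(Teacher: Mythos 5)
Your proof is correct and follows essentially the same route as the paper: both count the union of the conjugates of $H$ (at most $[G:H]$ of them, via the normalizer and orbit--stabilizer, all sharing the identity) to conclude that this union cannot cover $G$, so some $g$ lies in no conjugate of $H$ and hence $\cclass{g} \cap H = \emptyset$. If anything, your write-up is slightly more careful on the edge case: when $H$ is normal it has only one conjugate and the paper's displayed strict inequality $\big|\bigcup_{i=1}^n H_i\big| < n|H|$ does not literally hold, whereas your bound $[G:H]\,(|H|-1)+1 \leq |G|-1$ covers all cases uniformly.
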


\begin{proof}
The group $G$ acts on its subgroups by conjugation.  By the Orbit-Stabilizer theorem, the orbit $\{H_i\}_{i=1}^n$ of $H$ has cardinality $n = |G| \big/ |N_G(H)|$, where $N_G(H)$ is the normalizer subgroup of $H$. Since $H \subseteq N_G(H)$, $n \leq |G| \big/ |H|$. Note that all $|H_i|$ are equal to $|H|$ and for $i \neq j$, $|H_i \cap H_j| \geq 1$. It follows that
\[\left| \bigcup_{x \in G} x H x^{-1} \right| = \left|\bigcup_{i=1}^n H_i \right| < n |H| \leq |G|,\]
whereby there exists $g \in G$ with the property that for all $x \in G$, $g \notin x H x^{-1}$. This means that for all $x \in G$, $x^{-1} g x \not \in H$, meaning $\cclass g \cap H = \emptyset$.
\end{proof}

We can now prove part (I) of the main theorem in full generality.

\begin{proof}[Proof of Main Theorem (I)]
By the Artin-Chebotarev Density Theorem, for all conjugacy classes $\cclass \sigma \subseteq \gal(\loverq)$, the quantity $\denrel \big(\Ploverq (\cclass \sigma) \big)$ exists and is positive. By Lemma \ref{lem:descriptionofhighprimes} and the fact that the union in (\ref{eqn:descriptofphigh}) is a disjoint union,
\[\denrel \big(\Phigh(K) \big) = \sum_{\substack{\cclass \sigma \subseteq G \\ \cclass \sigma \cap H = \emptyset}} \denrel \big(\Ploverq (\cclass \sigma) \big).\]
By Lemma \ref{lem:finitegroupfact}, the sum in the previous expression is non-empty; in particular, $\denrel \big(\Phigh(K) \big)$ exists and is positive. It follows that $\Phigh(K)$ is a divergent set of primes. The proof is now completed by Lemmas \ref{lem:zerodensityfromCRT} and \ref{lem:highprimes}.
\end{proof}

\section{Norm forms represent relatively many primes}\label{sec:manyprimes} 

Let $\Psi$ be a norm form arising from the number field $K$. In this section we will show that $\denrel(\PPsi) > 0$. By Lemma \ref{lem:splitprimes},
\[\PPsi = \big\{ p \in \P \ \big| \ \text{there is a principal, degree 1 prime of $K$ above $p\Z$} \big\},\]
thus we must show that the relative density of this set exists and is positive. We will accomplish this by proving that $\PPsi \ \dot= \ \Plow(T)$, where $T$ is some well-chosen extension of $K$.

Lemma 4.1 already provides us with a nice connection between the property of being degree 1 and the Artin symbol. To connect the property of being principal with splitting behavior, we use the Hilbert class field.

\begin{hclassfield}[{\cite[Corollary 5.25 and Theorem 8.19]{coxbook}}]
Let $K$ be a number field. There exists a unique finite Galois extension $T$ of $K$ with the property that for all primes $\pprime$ of $K$, $\pprime$ is principal if and only if $\pprime$ splits completely in $T$.
\end{hclassfield}

We are now in the position to prove part (II) of the main theorem.

\begin{proof}[Proof of Main Theorem (II)]

Let $T$ be the Hilbert class field of $K$. It suffices by Lemma \ref{lem:descriptionofhighprimes} and the \acd{} to prove that $\PPsi \ \dot= \ \Plow(T)$, where $\Plow(T)$ is defined in (\ref{eqn:defofplow}). Using the description of $\PPsi$ in Lemma \ref{lem:splitprimes}, this amounts to showing that there is a principal, degree 1 prime of $K$ above $p\Z$ if and only if there is a degree 1 prime of $T$ above $p\Z$.

Suppose $\pprime$ is a principal, degree 1 prime of $K$ above $p\Z$. Since $T$ is the Hilbert class field of $K$, $\pprime$ splits completely in $T$. Since $\pprime$ is degree 1, any prime of $T$ above $\pprime$ is a degree 1 prime of $T$ above $p\Z$.

Conversely, suppose that $p\Z$ is unramified in $T$ and that $\bprime$ is a degree 1 prime of $T$ above $p\Z$. Since $\bprime$ is degree 1, so is $\pprime \defeq \bprime \cap \intringk$; in particular, the inertia degree of $\bprime$ over $\pprime$ is 1. Since $T \big/ K$ is Galois and $\pprime$ is unramified in $T$, $\pprime$ splits completely in $T$. Since $T$ is the Hilbert class field of $K$, $\pprime$ is a principal, degree 1 prime of $K$ above $p\Z$.
\end{proof}

\section{Arithmetic progressions in $\NPsi$}

Do there exist arbitrarily long arithmetic progressions, each term of which is of the form $x^2 + y^2$? Because $\den(\NPsi) = 0$, we cannot appeal to Szemer\'edi's theorem, which states that \emph{any set $A \subseteq \N$ with $\upden(A)>0$ is AP-rich}, that is, contains arbitrarily long arithmetic progressions.

We can still reach the desired conclusion, however, using the fact that $\denrel(\PPsi) > 0$ and the following famous theorem of Green and Tao.

\begin{greentao}[{\cite{greentaooriginal}}]
Any subset $P \subseteq \mathbb{P}$ of positive relative upper density $\updenrel(P) > 0$ is AP-rich.
\end{greentao}

Combining part (II) of the main theorem with the Green-Tao theorem, we see that not only is $\NPsi$ AP-rich for any norm form $\Psi$, but $\PPsi$ is as well.

Do we know of a way to prove that $\NPsi$ is AP-rich without simultaneously giving the result for $\PPsi$?  If not, this is indeed curious: the only way we know to show that the set of positive integers represented by $\Psi$ is AP-rich is to show the ostensibly much harder result that the set of \emph{primes} represented by $\Psi$ is AP-rich.\\

\noindent \textbf{Acknowledgements} \ Thanks goes to Jim Cogdell, Warren Sinnott, Joel Moreira, and James Maynard for helpful discussions. Gratitude is also extended to Timothy Browning for calling the author's attention to the work of Odoni.

\bibliographystyle{alphanum}
\bibliography{formsbib}

\end{document}